\documentclass[12pt,a4paper]{amsart}
\usepackage[top=35mm, bottom=30mm, left=30mm, right=30mm]{geometry}
\usepackage{mathptmx}
\usepackage{mathrsfs}

\usepackage{verbatim}
\usepackage{url}
\usepackage[all]{xy}
\usepackage{xcolor}
\usepackage[colorlinks=true,citecolor=blue]{hyperref}



\usepackage{amsmath,amsthm}
\usepackage{amssymb}

\usepackage{enumerate}

\usepackage{graphicx}





\newtheorem{thm}{Theorem}[section]

\newtheorem{lem}[thm]{Lemma}



\theoremstyle{definition}
\newtheorem{defin}[thm]{Definition}



\numberwithin{equation}{section}


\frenchspacing





\begin{document}
\title[The structures of pointwise recurrent quasi-graph maps]{The structures of pointwise recurrent quasi-graph maps}

\author[Z.Q.~Yu]{Ziqi Yu}
\address{Soochow University, Suzhou, Jiangsu 215006, China}
\email{20224007003@stu.suda.edu.cn}

\author[S.H.~Wang]{Suhua Wang}
\address{Suzhou Vocational University, Suzhou, Jiangsu 215104, China}
\email{wangsuhuajust@163.com}

\author[E.H.~Shi]{Enhui Shi}
\address{Soochow University, Suzhou, Jiangsu 215006, China}
\email{ehshi@suda.edu.cn}

\begin{abstract}
	We show that a continuous map $f$ from a quasi-graph $G$ to itself is pointwise recurrent if and only if one of the following two statements holds:
	(1) $X$ is a simple closed curve and $f$ is  topologically conjugate to an irrational rotation on the unit circle $\mathbb S^1$; (2) $f$ is a perodic homeomorphism.

\end{abstract}

\maketitle

\date{\today}

\section{Introduction}

It is well known that there always exist recurrent points for any continuous map $f$ on a compact metric space $X$ by Birkhoff recurrence theorem
and these recurrent points are large from the point of view of measures by Poincar\'e recurrence theorem.
These make  recurrence to be an important research subject in the theory of dynamical systems.  For a single point $x$ in $X$, there are several
natural classes of recurrence: periodic points, minimal points (or almost periodic points), distal points, recurrent points, etc.. It is interesting  to study the structure of $f$ if all points of $X$ possess the same kind of recurrence. There have been an intensively studied around this topic.
\medskip

  A classical result due to Ma\~n\'e says
that every minimal expansive homeomorphism $f$ is topologically conjugate to a subshift of some symbol system (\cite{Mane}),
which was extended to the case in which $f$ is both pointwise recurrent and expansive by Shi, Xu, and Yu very recently (\cite{SXY}).
Mai and Ye determined the structure of pointwise recurrent  maps having the pseudo orbit tracing property (\cite{MY}).
It is well known that distal systems are semisimple, that is the whole phase space is the disjoint union of minimal sets (see e.g. \cite{Ao}).
 The structure of minimal distal systems was completely described by Furstenberg
in \cite{Fu}. It is also known that a distal expansive system is finite (see e.g. \cite[Prop. 2.7.1]{BS}). Montgomery proved that every pointwise periodic homeomorphism on
a connected manifold is periodic (\cite{Mo}).
\medskip

Beside the above results for abstract systems, there are many interesting results around the structure of $f$ defined on  spaces of dimension $\leq 2$.
 Oversteegen and Tymchatyn proved that recurrent homeomorphisms on the plane are periodic (\cite{OT}); Kolev and P\a'erou\a`eme  established similar results for recurrent homeomorphisms on compact surfaces with negative Euler characteristic (\cite{KP}).  It is known that if $f$ is a pointwise recurrent map on a dendrite $X$,
 then the family $\{f^n:n=0,1,2,\cdots\}$ is equicontinuous and every cut point of $X$ is periodic (see e.g. \cite[Chap. XII]{Why}). For a pointwise recurrent graph map $f$,
 Mai showed that either it is topologically conjugate to an irrational rotation on the circle or it is of finite order (\cite{mai}).  Katznelson and Weiss obtained that
 if $f$ is a homeomorphism on a $0$-dimensional space $X$ which is both pointwise positively recurrent and transitive then it is minimal (\cite {KW}).
A celebrated result due to Auslander, Glasner, and Weiss says that a $0$-dimensional distal minimal system is equicontinuous (\cite{AGW}).

\medskip

The study of the dynamics of graph maps can date back to the work of Blokh in 1980's (\cite{Bl2, Bl3, Bl4}). Since then, lots of literatures appeared in this area.
To extend this theory to more general settings, Mai and Shi introduced the notion of quasi-graph and studied the topological structures and mapping properties of
quasi-graphs (\cite{ms}). Roughly speaking, a quasi-graph is a continuum which is the union of a graph and finitely many inner rays (see Section 2 for the definition).
One may consult \cite{LOZ} for the discussions about the topological entropy of quasi-graph maps. The aim of the paper is to proceed the study of the dynamics of quasi-graph mappings, especially focusing on the recurrence.
\medskip

The following is the main theorem of the paper.

\begin{thm} \label{main thm}
	Let $X$ be a quasi-graph, and $f:X \rightarrow X$ be a continuous map. Then $f$ is pointwise-recurrent if and only if one of the following two statements holds:
	\begin{itemize}
		\item[(1)] $X$ is a circle and $f$ is a homeomorphism topologically conjugate to an irrational ratation of the unit circle $\mathbb S^1$;
		\item[(2)] $f$ is a perodic homeomorphism, i.e., there exists  $m \in \mathbb{N}$ such that $f^m=id_X$.
	\end{itemize}	
\end{thm}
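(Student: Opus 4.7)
The reverse implication is immediate: an irrational rotation of $\mathbb{S}^1$ is minimal and hence pointwise recurrent, and a periodic homeomorphism with $f^m=\mathrm{id}_X$ has every point periodic. The content of the theorem lies in the forward implication, so I focus on it.

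My overall strategy is to reduce to Mai's theorem \cite{mai} on pointwise recurrent graph maps, which already produces exactly the dichotomy in the conclusion. The reduction consists in showing that a pointwise recurrent quasi-graph $(X,f)$ must in fact have $X$ equal to its underlying graph part, i.e.\ that $X$ has no inner rays. Before attacking that reduction I would first extract some generalities. Pointwise recurrence forces $f$ to be surjective (its image is closed and contains every point), and, using the one-dimensional structure of $X$ together with the finiteness of its branch-point set, I would upgrade this to $f$ being injective, hence a homeomorphism. As a homeomorphism, $f$ permutes the finitely many inner rays (identified up to their accumulation continua) and preserves the graph part, allowing one to pass to an iterate $f^N$ that fixes each inner ray setwise.

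The core step is then to rule out the existence of a single inner ray $R$. Fix a parametrization $r\colon[0,\infty)\to R$ and let $W(R):=\overline{R}\setminus R$, a proper subcontinuum of $X$ on which the tail of $R$ accumulates. Having first shown that $W(R)$ is $f^N$-invariant and that $f^N$ sends $R$ into $R\cup W(R)$, I consider a point $x=r(t_0)$ with $t_0$ large. Pointwise recurrence gives $f^{Nn_k}(x)\to x$, while each iterate lies in $R\cup W(R)$; since $x$ is separated from $W(R)$, these iterates must lie on $R$ near $x$. The linear order on $R$ then forces $f^N|_R$ to act essentially as an order-preserving self-map of the half-line, whose pointwise recurrent points are necessarily fixed. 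Pairing this rigidity along $R$ with the pointwise recurrence demanded at points of $W(R)$ — which in turn requires orbits approaching $W(R)$ from $R$ — produces the desired contradiction, so $X$ has no inner rays, $X$ is a graph, and \cite{mai} concludes.

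The principal obstacle is controlling how $f$ acts on the non-compact inner ray. A priori $f$ could fold $R$, split it across several images, exchange $R$ with another inner ray, or send a compact initial arc of $R$ entirely into the graph part. Establishing invariance of $W(R)$, the stabilization of each ray by some iterate, and the order-respecting character of $f^N|_R$ all require careful exploitation of the quasi-graph structure: only finitely many inner rays, a well-defined accumulation continuum for each, and only finitely many branch points in the graph part. Once this combinatorial control is in place, the geometric contradiction sketched above closes the argument.
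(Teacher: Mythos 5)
Your reduction is built on a claim that is false: you propose to show that a pointwise recurrent quasi-graph map forces $X$ to have no inner rays, so that $X$ is a graph and Mai's theorem applies directly. But the identity map on the Warsaw circle (a quasi-graph with one oscillatory quasi-arc, and the paper's first example of a quasi-graph that is not a graph) is pointwise recurrent, so inner rays cannot be ruled out. The theorem accommodates them through conclusion (2): in the presence of inner rays $f$ must be a periodic homeomorphism, and it is only conclusion (1) that forces $X$ to be a circle. The specific step of your sketch that fails is the final ``contradiction'': you assert that pointwise recurrence at points of $W(R)=\overline{R}\setminus R$ ``requires orbits approaching $W(R)$ from $R$.'' It does not --- recurrence of a point $w\in W(R)$ concerns only the orbit of $w$ itself, which may be trivial (e.g.\ $w$ fixed). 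Your correct observation that a recurrent order-preserving self-map of the half-line must fix points does not produce a contradiction; it produces exactly the conclusion that $f^N$ is the identity on $R$, which is conclusion (2), not an absurdity.

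The paper's actual route keeps the rays and runs as follows. First (Lemma \ref{invariant graph}) one shows that some iterate $g=f^k$ leaves invariant a graph $G'$ containing the core graph $G$: one forms $G_m=\bigcup_{i=0}^m g^i(G)$ and shows that if this increasing sequence of graphs never stabilizes, then a nested-intersection argument produces a point $v\in G$ whose $g$-orbit escapes along the rays and is therefore not recurrent --- contradiction. Then Mai's dichotomy is applied to $g|_{G}$; the irrational-rotation alternative is excluded precisely because an oscillatory quasi-arc is attached, so $g|_{G}$ is periodic. Finally (Lemma \ref{fix quasi-arc}) each ray $L_i$, having its endpoint fixed by a power $h$ of $g$, satisfies $L_i\subset{\rm Fix}(h^{k_i})$: this is where your half-line rigidity argument correctly belongs, as a tool to propagate periodicity along the rays rather than to eliminate them. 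A secondary gap in your sketch is the early claim that surjectivity upgrades to injectivity ``using the one-dimensional structure'': in the paper injectivity is a consequence of the final identity $f^m={\rm id}_X$, not an input, and proving it directly at the outset is not routine.
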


The proof relies on the main theorem by Mai in \cite{mai} and the techniques development in \cite{ms}. The Key idea is to show the existence of
a large  subgraph which is invariant under some power $f^k$ of $f$, and then reduce the proof to the case of graph mappings.

\section{Preliminaries}

In this section, we will recall some notions and results around recurrence and quasi-graphs.

\subsection{Structures of quasi-graphs}

 Let $X$ be a compact metric space with metric $d$. For any $A\subset X$, we use $\overline{A}$ and ${\AA}$ to denote the closure and the interior of $A$ in $X$ respectively. For $x\in X$ and $\epsilon>0$,  $B(x,\epsilon):=\{y\in X: d(x,y)<\epsilon\}$ denotes the open ball centered at $x$ with radius $\epsilon$.
 \medskip

A {\it continuum} is a compact connected metric space. An {\it arc} is a continuum homeomorphic to the closed interval $[0,1]$ and a {\it circle} is a continuum homeomorphic to the unit circle $\mathbb S^1$. An {\it $n$-star with center $v$} is a continuum which is the union of $n$ arcs with one endpoint $v$ being as a common intersecting point. By a {\it graph}, we mean a continuum which can be written as the union of finitely many arcs any two of which are either disjoint or intersect only in one or both of their endpoints. Each of these arcs is called an {\it edge} of $G$, and each endpoint of an edge is called a {\it vertex}. One may refer to \cite{Na} for details about graphs. \medskip

 Let $X$ be a compact arcwise connected metric space and let $v\in X$. The \textit{valence} of $v$ in $X$, denoted  by ${\rm val}(v,X)$ or simply by ${\rm val}(v)$, is the number ${\rm max}\{n\in \mathbb{N}$: there exists an $n$-star with center $v$ in $X$$\}$ (${\rm val}(v)$ may be $\infty$) ; $v$ is called an \textit{endpoint} of $X$ if ${\rm val}(v)=1$; $v$ is called a \textit{branch point} if ${\rm val}(v)\geq 3$. We use the symbols ${\rm End}(X)$ and ${\rm Br}(X)$ to denote the endpoint set and the branch point set of $X$ respectively. \medskip

The following definition of quasi-graph  was introduced in \cite{ms}.

\begin{defin} \label{quasi-graph}
	A nondegenerate compact arcwise connected metric space $X$ is called a \textit{quasi-graph} if there exists $N \in \mathbb{N}$, such that $\overline{Y}-Y$ has at most $N$ arcwise connected components, for every arcwise connected subset $Y$ of $X$.
\end{defin}

From the definition, we see that the Warsaw circle and graphs are examples of quasi-graphs.  It is not difficult to verify that the endpoint set and the branch point set of a quasi-graph are finite (see \cite{ms} for the proofs).

 \medskip

To describe the structures of quasi-graphs, we need some definitions.	Let $X$ be a compact arcwise connected metric space. A continuous injection $\varphi:\mathbb{R}_+ \rightarrow X$ is called a \textit{quasi-arc}, and $\varphi(0)$ is called the \textit{endpoint} of $\varphi$;  the image $L:=\varphi(\mathbb{R}_+)$ is also called
a {\it quasi-arc}. Set $\omega(L)=\omega(\varphi)=\cap\{\overline{L[m,\infty)}:m\in \mathbb{N}\}$, where $L[m,\infty)=\varphi([m, \infty))$. The set $\omega(L)$ (resp. $\omega(\varphi$)) is said to be the \textit{$\omega$-limit set} of $L$ (resp. $\varphi$). If $\omega(L)$ contains more than one point, then we call $L$ or $\varphi$ an \textit{oscillatory quasi-arc}.

 \medskip

Let $X$ be a quasi-graph and let $x\in X$. For every $\epsilon>0$, denote by ${\rm St}(x,\epsilon)={\rm St}(x,\epsilon,X)$ the arcwise connected component of $B(x,\epsilon)$ containing $x$. The following theorem  explicitly describes the topological structure of a quasi-graph.

\medskip
\begin{thm}\cite[Theorem 2.24]{ms}  \label{2.24}
A continuum $X$ is a quasi-graph if and only if there are a graph $G$ and n pairwise disjoint oscillatory quasi-arcs $L_1, \cdots ,L_n$ in $X$, for some  $n\in \mathbb{Z_+}$, such that

\begin{itemize}
  \item[(1)] $X=G \cup (\bigcup_{i=1}^{n}L_i)$, and ${\rm End}(X) \cup (\bigcup  \{{\rm St}(x,\epsilon_0):x \in {\rm Br}(X) \}) \subset G$ for some $ \epsilon_0 >0 $,
  \item[(2)] $L_i \cap G=\{a_i\} $ for each $1 \leq i \leq n$, where $a_i $ is the endpoint of $ L_i$,
  \item[(3)] $\omega(L_i) \subset G \cup (\bigcup_{j=1}^{i-1}L_j)$ for each $ 1 \leq i \leq n$, and
  \item[(4)] if $\omega(L_i) \cap L_j \neq \emptyset$ for some $i,j \in \{1,\cdots,n\}$, then $L_j \subset \omega(L_i)$.

\end{itemize}

\end{thm}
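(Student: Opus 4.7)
The plan is to prove both directions of the biconditional, with sufficiency handled by induction on the number $n$ of quasi-arcs and necessity by a constructive extraction of the decomposition.

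For sufficiency, assume $X$ admits a decomposition $X = G \cup \bigcup_{i=1}^n L_i$ satisfying (1)--(4). The goal is to produce a uniform bound $N$ on the number of arcwise connected components of $\overline{Y} - Y$ for any arcwise connected $Y \subseteq X$. When $n = 0$, $X = G$ is a graph and this is classical (edges contribute at most their endpoints). For the inductive step, I would peel off the ``topmost'' quasi-arc $L_n$: the set $X' := G \cup \bigcup_{i<n} L_i$ is again a quasi-graph by inductive hypothesis, $\omega(L_n) \subseteq X'$ by property (3), and $L_n$ meets $X'$ only at $a_n$ by property (2). Given an arcwise connected $Y \subseteq X$, split it as $(Y \cap X') \cup (Y \cap L_n)$, bound the two contributions separately, and add. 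The contribution from $Y \cap L_n$ is bounded by the structure of a single oscillatory quasi-arc, controlled by how $Y$ enters and exits the oscillation near $\omega(L_n)$.

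For necessity, assume $X$ is a quasi-graph with uniform bound $N$. Using the finiteness of $\mathrm{End}(X)$ and $\mathrm{Br}(X)$ recorded in the preliminaries, I would first identify the ``wild'' points where $X$ fails to be locally arcwise connected; these are exactly the accumulation loci of oscillatory rays. I would then extract the rays iteratively: at each stage, select a maximal oscillatory quasi-arc $L_k$ whose $\omega$-limit set lies entirely in the already-extracted part, parameterized by a continuous injection $\varphi_k : \mathbb{R}_+ \to X$. The bound $N$ forces this process to terminate after finitely many steps, producing $L_1, \ldots, L_n$. The graph part $G$ is then the closure of $X \setminus \bigcup_i L_i$, augmented if necessary by small star-neighborhoods of branch points to arrange (1); that $G$ is a graph follows from the finiteness of $\mathrm{Br}(X)$, $\mathrm{End}(X)$, and the bound $N$. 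Property (4) is obtained from the observation that if $\omega(L_i)$ meets an oscillatory quasi-arc $L_j$, then the oscillation of $L_i$ must sweep out all of $L_j$, since $\omega(L_i)$ is closed and connected-enough to contain the image of the full ray.

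The main obstacle lies in the necessity direction, specifically in verifying that the $L_i$'s can be ordered so that property (3) holds. At each stage of the extraction one must locate a quasi-arc whose $\omega$-limit set is ``safely below'' the remaining rays, which requires a careful bookkeeping argument combining the uniform bound $N$, property (4), and the finiteness of branch points (which caps how many oscillatory rays can accumulate on a single arc of $G$). A secondary subtlety is that each $L_i$ is only a continuous injective image of $\mathbb{R}_+$ rather than an embedded half-line, so local arc-like control along $L_i$ must be derived carefully from the parameterization $\varphi_i$ rather than from the subspace topology alone.
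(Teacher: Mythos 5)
This statement is Theorem 2.24 of Mai--Shi \cite{ms}; the present paper quotes it and gives no proof, so there is no in-paper argument to compare against. Judged on its own, your proposal is a roadmap rather than a proof: in both directions the step that carries the actual mathematical content is asserted rather than carried out. In the sufficiency direction, the inductive step founders exactly where the difficulty lies. If $Y\cap L_n$ is cofinal in $L_n$, then $\overline{Y\cap L_n}-Y$ contains points of $\omega(L_n)$, which lives in $X'$ and not in $L_n$; saying this is ``controlled by how $Y$ enters and exits the oscillation'' does not bound the number of arcwise connected components of $\omega(L_n)-Y$. That bound requires using the structure of $\omega(L_n)$ itself --- by (3) and (4) it is a subcontinuum of $X'$ that swallows whole any $L_j$ it meets, so its own limit behaviour recursively enters the estimate --- and your sketch never engages with this. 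You also invoke property (4) only in the necessity direction, whereas it is precisely what makes the sufficiency count close up.

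In the necessity direction the gaps are larger. You assert, without argument, that each ``wild'' ray admits a continuous injective parameterization $\varphi_k:\mathbb{R}_+\to X$, that the extraction terminates (``the bound $N$ forces this''), that the $L_i$ can be ordered so that (3) holds (which you yourself flag as ``the main obstacle''), and that the residue is a graph. The proposed definition $G=\overline{X\setminus\bigcup_i L_i}$ is moreover suspect: there is no reason this closure is a graph, nor that it meets each $L_i$ only in $a_i$ (condition (2)), nor that adjoining star neighborhoods of branch points preserves both properties; each of these needs the finiteness results for ${\rm End}(X)$ and ${\rm Br}(X)$ plus a genuine local analysis near the accumulation loci. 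In \cite{ms} this direction is the culmination of a long sequence of structural lemmas; a plan that defers all of them to ``careful bookkeeping'' has not proved the theorem. If your intent is to use this result in the paper, the correct move is what the authors do: cite \cite[Theorem 2.24]{ms} rather than reprove it.
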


\subsection{Recurrence and graph mappings}

By a {\it dynamical system} we mean a pair $(X,f)$ where $X$ is a compact metric space and $f:X\rightarrow X$ is a continuous map. We use $\mathbb{N}$ and $\mathbb{Z}_+$ to denote the sets of positive integers and non-negative integers respectively. For any $x\in X$, $\{f^n(x):n\in \mathbb{Z}_{+}\}$ is called the {\it orbit} of $x$ under $f$ and is denoted by $O(x,f)$. A point $x\in X$ is called a {\it periodic point} of $f$ if $f^n(x)=x$ for some $n\in \mathbb{N}$. Particularly, $x$ is called a {\it fixed point} of $f$ if $f(x)=x$. A point $x\in X$ is called a {\it recurrent point} of $f$ if for any neighborhood $U$ of $x$ there exists $i\in \mathbb{N}$ such that $f^i(x)\in U$; this is equivalent to
saying that there exists an increasing sequence $n_1<n_2<\cdots$ such that $f^{n_i}(x)\rightarrow x\ (i\rightarrow\infty)$. If every point $x\in X$ is recurrent,
then $f$ is called  {\it pointwise recurrent}. Let ${\rm Fix}(f)$, $P(f)$ and $R(f)$ denote the sets of fixed points, period points and recurrent points respectively.
\medskip

The following lemma can be seen in \cite[Chapter IV, Lemma 25]{BC}.

\begin{lem}\label{recurrent}
Let $X$ be a compact metric space. Then for any $m\in {\mathbb{N}}$, $R(f^m)=R(f)$.

\end{lem}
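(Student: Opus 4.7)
The plan is to prove the two inclusions $R(f^m) \subseteq R(f)$ and $R(f) \subseteq R(f^m)$ separately. The first is essentially by definition: if $x \in R(f^m)$, there are $k_i \nearrow \infty$ with $(f^m)^{k_i}(x) = f^{mk_i}(x) \to x$, and the strictly increasing sequence $n_i := mk_i$ of positive integers witnesses $x \in R(f)$.

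For the nontrivial inclusion, fix $x \in R(f)$ with a witnessing sequence $n_i \nearrow \infty$ such that $f^{n_i}(x) \to x$. Applying the pigeonhole principle to the residues $n_i \bmod m$, I pass to a subsequence along which every $n_i$ is congruent to a single residue $r \in \{0, 1, \ldots, m-1\}$; along this subsequence one still has $f^{n_i}(x) \to x$, so there are infinitely many returns of residue $r$ lying arbitrarily close to $x$. The case $r = 0$ gives $x \in R(f^m)$ directly. When $r \neq 0$, set $k = m / \gcd(r,m)$, so that $kr \equiv 0 \pmod m$. The strategy is then to construct, for every $\epsilon > 0$, an exponent of the form $N = n^{(1)} + n^{(2)} + \cdots + n^{(k)}$, with each $n^{(j)}$ drawn from the subsequence of residue-$r$ returns, satisfying both $d(f^N(x), x) < \epsilon$ and $N$ arbitrarily large. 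Since $N \equiv kr \equiv 0 \pmod m$, this produces the required return times of $x$ under $f^m$.

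The principal obstacle is that in the composition $f^N(x) = f^{n^{(k)}} \circ f^{n^{(k-1)}} \circ \cdots \circ f^{n^{(1)}}(x)$, each iterate $f^{n^{(j)}}$ has a modulus of continuity depending on (and worsening with) the large exponent $n^{(j)}$, so one cannot simply bound the total error by summing the individual return discrepancies $d(f^{n^{(j)}}(x), x)$. I get around this by selecting the $n^{(j)}$ in reverse order, allowing each choice's modulus of continuity to dictate the tolerance required for the next choice. Concretely: pick $n^{(k)}$ as a residue-$r$ return with $d(f^{n^{(k)}}(x), x) < \epsilon/2$ and use uniform continuity of $f^{n^{(k)}}$ to produce $\delta_{k-1} > 0$ such that $d(y, x) < \delta_{k-1}$ implies $d(f^{n^{(k)}}(y), f^{n^{(k)}}(x)) < \epsilon/2$; then pick $n^{(k-1)}$ with $d(f^{n^{(k-1)}}(x), x) < \delta_{k-1}/2$ and extract $\delta_{k-2}$ the same way; iterate down until $n^{(1)}$ is chosen with $d(f^{n^{(1)}}(x), x) < \delta_2 / 2$. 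A telescoped triangle inequality then gives $d(f^N(x), x) < \epsilon$, and by choosing each $n^{(j)}$ sufficiently far along the subsequence $N$ can be made arbitrarily large, establishing $x \in R(f^m)$.
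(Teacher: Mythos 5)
Your proof is correct. Note that the paper does not actually prove this lemma---it is quoted from Block and Coppel \cite{BC}---so there is no in-paper argument to compare against; what you give is a valid, self-contained proof along essentially the classical lines. The easy inclusion $R(f^m)\subseteq R(f)$ is handled correctly, and in the converse you correctly identify the real difficulty: the moduli of continuity of the iterates $f^{n^{(j)}}$ degrade as $n^{(j)}$ grows, so one cannot simply sum the individual return discrepancies. Your backward selection of exponents (choose $n^{(k)}$ first, let its modulus of continuity dictate the tolerance for $n^{(k-1)}$, and so on) resolves this, and the telescoped triangle inequality yields $d(f^N(x),x)<\epsilon$ with $N=n^{(1)}+\cdots+n^{(k)}\equiv kr\equiv 0\pmod{m}$ and $N$ arbitrarily large, which is exactly what $x\in R(f^m)$ requires. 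Two cosmetic remarks: the final tolerance for $n^{(1)}$ should be $\delta_1/2$ rather than $\delta_2/2$ (an off-by-one slip in the indexing that does not affect the argument), and you only ever apply each modulus of continuity at the single point $x$, so plain continuity there would suffice, though uniform continuity is of course available by compactness. For comparison, the proof in \cite{BC} uses the same nested-tolerance device but runs the pigeonhole on the partial sums $k_1+\cdots+k_j$ of successively chosen return times modulo $m$, taking a difference of two congruent partial sums, rather than fixing a residue class of return times in advance and adding $m/\gcd(r,m)$ of them; the two combinatorial finishes are interchangeable.
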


 The following theorem is due to Mai.

\begin{thm}\cite[Theorem 4.4]{mai}  \label{1.1}
Let $G$ be a connected graph, and $f:G\to G$ be a continuous map. Then $f$ is pointwise-recurrent if and only if one of the following two statements holds:

  (1) $G$ is a circle and $f$ is a homeomorphism topologically conjugate to an irrational rotation of the unit circle $\mathbb{S}^1$;

  (2) $f$ is a periodic homeomorphism.
\end{thm}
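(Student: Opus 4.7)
The ``if'' direction is immediate: an irrational rotation of $\mathbb{S}^1$ is minimal, and a periodic homeomorphism is pointwise periodic, so both conditions yield pointwise recurrence.

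For the ``only if'' direction, assume $f$ is pointwise recurrent. First, pointwise recurrence together with compactness forces $f(X)=X$, since for any $y$ a subsequence of $\{f^{n_k-1}(y)\}$ converges to a preimage of $y$. A similar orbit-closure argument combined with density of $R(f)=X$ should give injectivity, so $f$ is a homeomorphism. Apply Theorem \ref{2.24} to write $X = G \cup \bigcup_{i=1}^n L_i$ with the four structural properties listed there. The finite sets ${\rm End}(X)$, ${\rm Br}(X)$ and $\{a_1,\dots,a_n\}$ are each permuted by the homeomorphism $f$ (being preserved under any auto-homeomorphism of the quasi-graph), so some power $f^{k_0}$ fixes every point of $\mathcal{F} := {\rm End}(X) \cup {\rm Br}(X) \cup \{a_1,\dots,a_n\}$. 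By Lemma \ref{recurrent}, the iterate $f^{k_0}$ is still pointwise recurrent, so I replace $f$ by $f^{k_0}$ and assume $f$ fixes $\mathcal{F}$ pointwise.

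Following the strategy announced by the authors, the main step is to exhibit a subgraph $G^{\ast} \subseteq X$ containing $G$ that is invariant under some further power $f^{k_1}$. I would construct $G^{\ast}$ by truncating each oscillatory quasi-arc $L_i$ at a parameter $t_i > 0$, so that $\varphi_i([0,t_i])$ is a genuine arc meeting $G$ only at $a_i$, and then setting $G^{\ast}$ equal to the union of $G$ with the $f^{j}$-images of these truncations for $0 \leq j \leq k_1 - 1$. The parameters $t_i$ must be selected so that the resulting finite union is simultaneously a subgraph and $f^{k_1}$-invariant. Because $f$ fixes each $a_i$ and respects the accumulation hierarchy $\omega(L_i) \subseteq G \cup \bigcup_{j<i} L_j$ of Theorem \ref{2.24}(3), such a coherent truncation can be found; its $f$-orbit yields the desired $f^{k_1}$-invariant subgraph $G^{\ast} \subseteq X$.

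Now apply Mai's Theorem \ref{1.1} to the pointwise-recurrent (by Lemma \ref{recurrent}) graph map $f^{k_1}|_{G^{\ast}}$. If $G^{\ast}$ is a circle and $f^{k_1}|_{G^{\ast}}$ is conjugate to an irrational rotation, then $f^{k_1}$ has no periodic point on $G^{\ast}$; but every $a_i \in G^{\ast}$ is fixed, forcing $n=0$, hence $X = G^{\ast} = \mathbb{S}^1$, and a standard argument (roots of an irrational rotation are themselves conjugate to irrational rotations) upgrades the conclusion from $f^{k_1}$ to $f$, giving (1). Otherwise $f^{k_1}|_{G^{\ast}}$ is periodic, so some $f^{N} = \mathrm{id}$ on $G^{\ast}$; one then extends this identity to all of $X$ by using that every point in $X$ is recurrent and is approximated by its own orbit lying in $G^{\ast}$, together with continuity of $f^{N}$, yielding (2). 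The main obstacle is the construction of $G^{\ast}$ in the third paragraph: since $f$ may permute the $L_i$ among themselves and need not preserve their parameterizations $\varphi_i$, choosing truncation depths $t_i$ that produce a genuinely $f^{k_1}$-invariant finite union requires a careful exploitation of the combinatorial hierarchy in Theorem \ref{2.24}(3)--(4).
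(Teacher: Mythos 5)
There is a fundamental problem with your attempt: it is circular. The statement you were asked to prove is Mai's theorem for \emph{graph} maps (Theorem \ref{1.1}), which the paper does not prove at all but imports from \cite[Theorem 4.4]{mai}. Your argument explicitly invokes ``Mai's Theorem \ref{1.1}'' applied to $f^{k_1}|_{G^{\ast}}$ in its final step, so it assumes the very result it is supposed to establish. Worse, when the space in question is already a graph $G$, the decomposition from Theorem \ref{2.24} is trivial ($n=0$, no oscillatory quasi-arcs), your $G^{\ast}$ collapses to $G$ itself, and the whole argument reduces to the tautology ``apply Theorem \ref{1.1} to $f|_G$.'' None of the genuine content of Mai's theorem --- e.g.\ why a pointwise recurrent continuous \emph{map} on a graph must be injective, why a pointwise recurrent homeomorphism of a graph that is not a circle with irrational rotation number must have finite order --- is addressed; your one sentence asserting injectivity (``a similar orbit-closure argument \dots should give injectivity'') is precisely where the hard work of \cite{mai} lives, and it is not supplied.

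What you have actually sketched is a proof of the paper's Main Theorem \ref{main thm} for quasi-graphs, taking Theorem \ref{1.1} as a black box --- which is the paper's own strategy for \emph{that} theorem, not for this one. Even read in that light, your construction diverges from the paper's and has a gap: you propose to build an invariant subgraph $G^{\ast}$ by truncating each $L_i$ at some depth $t_i$ and taking finitely many $f$-images, but you acknowledge yourself that choosing the $t_i$ coherently is ``the main obstacle'' and you do not resolve it. The paper instead takes $G_m=\bigcup_{i=0}^{m}g^i(G)$ for $g=f^k$ and shows this increasing sequence of subgraphs must stabilize, because otherwise a nested-intersection argument produces a point $v\in G$ whose orbit escapes along the quasi-arcs, contradicting recurrence via Theorem \ref{5.4} (Lemma \ref{invariant graph}); the quasi-arcs are then handled not by truncation but by showing each $L_i$ is eventually pointwise fixed (Lemma \ref{fix quasi-arc}). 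If your goal is Theorem \ref{1.1} itself, you need an entirely different, self-contained argument about graph maps.
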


\subsection{Properties of quasi-graph mappings}
For a compact metric space $X$, let $C^0(X)$ be the set of all continuous maps from $X$ to itself.
If $X$ is a quasi-graph, we call $f\in C^0(X)$ a {\it quasi-graph map}.

\begin{lem}\cite[Corollary 3.2]{ms}  \label{3.2}
	Let $X$ be a quasi-graph. Suppose that $G$ is a graph in $X$ and $ f \in C^0(X)$. If $f(G)$ contains more than one point, then $f(G)$ is also a graph in $X$.
\end{lem}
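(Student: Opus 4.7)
The plan is to reduce the lemma to a single key subclaim: for any continuous map $\gamma\colon[0,1]\to X$, the image $K=\gamma([0,1])$ is either a single point or a graph embedded in $X$. Granting this, write $G=e_1\cup\cdots\cup e_k$ as the union of its edges; each edge is an arc, i.e.\ the continuous image of $[0,1]$, so each $f(e_i)$ is either a point or a graph. Then $f(G)=\bigcup_{i}f(e_i)$ is a finite union of graphs and points inside $X$; since by hypothesis $f(G)$ has more than one point and is connected (as the continuous image of the connected set $G$), one obtains a finite graph structure on $f(G)$ by taking as vertices the finitely many endpoints and branch points of the individual $f(e_i)$ together with their pairwise intersection points, and as edges the resulting subarcs. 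So the whole lemma reduces to the subclaim.

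For the subclaim, $K=\gamma([0,1])$ is a Peano continuum (compact, connected, locally arcwise connected) by Hahn--Mazurkiewicz. Apply Theorem~\ref{2.24} to decompose $X=G_{0}\cup L_{1}\cup\cdots\cup L_{n}$, where $G_{0}$ is a graph and each $L_{j}=\varphi_{j}(\mathbb{R}_{+})$ is an oscillatory quasi-arc meeting $G_{0}$ only at $\{a_{j}\}$. I would aim to establish:
\begin{itemize}
\item[(a)] $K\cap G_{0}$ is a subgraph of $G_{0}$;
\item[(b)] for each $j$, $K\cap L_{j}$ is contained in a compact subarc $\varphi_{j}([0,T_{j}])$ of $L_{j}$, and is a finite union of closed subarcs.
\end{itemize}
Statement (a) is a classical fact: a locally connected subcontinuum of a graph is itself a subgraph (one intersects with each edge and uses local connectedness at the vertices). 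In (b), the boundedness claim is the essential input, and once it is known, local connectedness of $K$ again forces $K\cap L_{j}$ to be a finite union of closed subarcs of the compact arc $\varphi_{j}([0,T_{j}])$. Then $K$ is the finite union of the pieces from (a) and (b), glued along the points $a_{j}$, hence a graph.

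The main obstacle is the boundedness in (b). Suppose for contradiction $K$ meets $L_{j}$ unboundedly; then by compactness of $K$, a sequence $\varphi_{j}(t_{m})$ with $t_{m}\to\infty$ has a subsequence converging to some $p\in K\cap\omega(L_{j})$. Because $L_{j}$ is oscillatory, $\omega(L_{j})$ contains a second point $q\neq p$; choose disjoint open balls $U\ni p$, $V\ni q$. The oscillation of $L_{j}$ between $U$ and $V$ produces infinitely many pairwise disjoint subarcs of $L_{j}\cap U$ clustering at $p$, each of which must exit $U$ before reaching the next visit of $L_{j}$ to $p$. Exploiting the quasi-graph axiom (the uniform bound $N$ on arcwise components of $\overline{Y}\setminus Y$ for any arcwise connected $Y\subset X$) applied to suitable arcwise connected pieces of $K\cap U$, one concludes that no connected neighborhood of $p$ in $K$ can be contained in $U$, contradicting local connectedness of $K$ at $p$. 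This local-connectedness-versus-oscillation dichotomy is the delicate core of the argument; the rest is essentially graph-theoretic bookkeeping.
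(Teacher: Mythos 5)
The paper does not actually prove this statement---it is quoted from \cite[Corollary 3.2]{ms}---so there is no in-paper argument to compare against; I will assess your proposal on its merits. Your overall strategy (reduce via Hahn--Mazurkiewicz to showing that a Peano subcontinuum $K$ of $X$ is a point or a graph, with the boundedness of $K\cap L_j$ as the crux) is the right one. But two of your reductions are shaky or superfluous. The passage from the subclaim back to the lemma via edges assumes that a connected finite union of graphs in $X$ is a graph, and that the sets $f(e_i)\cap f(e_l)$ contribute only finitely many ``intersection points''; neither is automatic (two arcs in a continuum can intersect in a Cantor set), and in any case the detour is unnecessary: $G$ is itself a Peano continuum, hence a continuous image of $[0,1]$, so the subclaim applies to $f(G)$ directly. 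Similarly, once you know $K\cap L_j\subset\varphi_j([0,T_j])$ for every $j$, you have $K\subset G_0\cup\bigl(\bigcup_j\varphi_j([0,T_j])\bigr)$, which is an honest finite graph ($G_0$ with finitely many arcs attached at the points $a_j$), and the classical fact that a nondegenerate subcontinuum of a finite graph is a graph finishes the proof. So your item (a) and the ``finite union of closed subarcs'' half of (b) are not needed---and (a) as stated does not apply anyway, since $K\cap G_0$ need not be connected and local connectedness of $K$ at a point of $G_0$ does not pass to the subspace $K\cap G_0$ (connected neighborhoods in $K$ may route through the $L_i$).

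The genuine gap is the core of (b). You assert that the oscillation of $L_j$ near $p\in\omega(L_j)$ contradicts local connectedness of $K$ at $p$ via ``the quasi-graph axiom applied to suitable arcwise connected pieces of $K\cap U$,'' but you never specify the arcwise connected set $Y$ to which the bound $N$ is applied, and counting components of $L_j\cap U$ does not by itself rule out a connected subset of $K\cap U$ containing $p$ together with points of infinitely many strands: connectedness is weaker than arcwise connectedness, the strands need not be relatively clopen in $X\cap U$, and $K$ near $p$ is not confined to $L_j$. What actually closes the argument is different. Since $K$ is locally \emph{arcwise} connected, take an arcwise connected neighborhood $W$ of $p$ in $K$ with $W\subset B(p,\epsilon)$ and $2\epsilon<\operatorname{diam}\omega(L_j)$, and join $p$ to $\varphi_j(t_m)\in W$ by an arc $\beta_m\subset W$. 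Because $\omega(L_j)\cap\varphi_j((0,\infty))=\emptyset$ (a consequence of Theorem~\ref{2.24}(3)), $\varphi_j^{-1}$ is continuous on $\varphi_j((0,\infty))$ and any arc ending at $\varphi_j(t_m)$ and starting outside $\varphi_j((0,\infty))$ must contain $\varphi_j((0,t_m])$ and enter the ray through $a_j$. Hence $W\supset\varphi_j((0,t_m])$ with $t_m\to\infty$, so $\overline{W}\supset\omega(L_j)$, contradicting $\operatorname{diam}\overline{W}\le 2\epsilon$. This ``free ray'' property is exactly where the Mai--Shi structure theory is consumed; it does not follow from the bare Definition~\ref{quasi-graph} plus general topology, and your sketch does not supply a substitute for it.
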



In \cite{ms}, in order to characterize the equivalent conditions of recurrent points of quasi-graph maps, Mai and Shi introduced the notion of arcwise connectivity limit points by modifying the notion of limit points.

\medskip
\begin{defin} \label{arc connect limit}
	Let $X$ be a metric space. A point $v$ in $X$ is said to be a \textit{arcwise connectivity limit point} of a point sequence $x_1$, $x_2$, $\cdots$ in $X$ if for every $\epsilon >0$ and every $m\in \mathbb{N}$ there exists a arcwise connected set $W_\epsilon$ such that $v\in W_\epsilon \subset B(v,\epsilon)$ and $ W_\epsilon \cap \{x_m,x_{m+1}, \cdots\} \neq \emptyset$.
\end{defin}

From the above definition we see that an arcwise connectivity limit point must be a limit point, and for a locally arcwise connected space $X$, these two notions are equivalent.

\medskip
\begin{thm}\cite[Theorem5.4]{ms} \label{5.4}
		Let $X$ be a quasi-graph and let $ f \in C^0(X)$. Then for every $v \in X $ the following three items are equivalent:
		
\begin{itemize}
 \item[(1)] $v \in R(f)$.
 \item[(2)] There is $\epsilon_0>0$ such that, for every $\epsilon \in (0,\epsilon_0]$, $B(v,\epsilon) \cap O(f(v),f)=St(v,\epsilon) \cap O(f(v),f) \neq \emptyset$.
 \item[(3)] $v$ is an arcwise connected limit point of the point sequence $v,f(v),f^2(v),\cdots.$

\end{itemize}

\end{thm}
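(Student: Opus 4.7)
My plan is to prove the cycle of implications in the order (2) $\Rightarrow$ (3), (3) $\Rightarrow$ (1), and finally the substantive direction (1) $\Rightarrow$ (2), invoking the structure theorem for quasi-graphs (Theorem 2.24) at the crucial step.

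First I dispatch the easy implications. For (2) $\Rightarrow$ (3), note that $St(v,\epsilon)$ is by its very definition an arcwise connected subset of $B(v,\epsilon)$ containing $v$, so taking $W_\epsilon = St(v,\epsilon)$ is itself a legitimate witness: the hypothesis $St(v,\epsilon) \cap O(f(v),f) \neq \emptyset$ for all sufficiently small $\epsilon$ already guarantees that $W_\epsilon$ meets the tail $\{f(v),f^2(v),\ldots\}$ eventually (one only needs to shrink $\epsilon$ so the finite initial segment $\{f(v),\ldots,f^{m-1}(v)\}$ is excluded from $B(v,\epsilon)$, which is automatic unless a finite iterate equals $v$, a case that makes $v$ periodic and trivially recurrent). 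For (3) $\Rightarrow$ (1), observe that an arcwise connectivity limit point is in particular a metric limit point, so one extracts indices $n_k \to \infty$ with $f^{n_k}(v) \to v$, which is exactly recurrence.

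The main work lies in (1) $\Rightarrow$ (2). I would fix the presentation $X = G \cup \bigl(\bigcup_{i=1}^{n} L_i\bigr)$ supplied by Theorem 2.24 and analyze $v$ according to its position. \textbf{Case A:} $v$ has a locally arcwise connected neighborhood in $X$. This covers every $v$ that lies in $G \setminus \bigcup_i \omega(L_i)$, as well as every interior point of some $L_i$ that avoids $\bigcup_{j > i} \omega(L_j)$. Using clause (1) of Theorem 2.24, such a $v$ admits $\epsilon_0 > 0$ with $B(v,\epsilon) = St(v,\epsilon)$ for all $\epsilon \in (0,\epsilon_0]$, and then (2) is simply the definition of recurrence. \textbf{Case B:} $v \in \omega(L_i)$ for some $i$. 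Here $B(v,\epsilon)$ contains infinitely many arcwise components for every small $\epsilon$, namely the ``strands'' of $L_i$ oscillating past $v$, while $St(v,\epsilon)$ picks out only the strand through $v$. The content of (2) in this case is that the returns $f^{n_k}(v) \to v$ must eventually land on that distinguished strand.

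My strategy for Case B is a pigeonhole on the finitely many oscillatory quasi-arcs, combined with the bounded-complexity clause in Definition 2.11. Assume for contradiction infinitely many return iterates $f^{n_k}(v)$ lie in components of $B(v,\epsilon) \setminus St(v,\epsilon)$. Each such component sits inside one of the finitely many $L_j$'s, so after refining the sequence we may assume all these returns live on a single quasi-arc $L_j$ with $v \in \omega(L_j)$ and $v \notin L_j$. I would then use continuity of the iterates together with clauses (3)--(4) of Theorem 2.24 to build an arcwise connected subset $Y$ of $X$ for which $\overline Y \setminus Y$ has infinitely many arcwise components, contradicting the defining bound $N$ of the quasi-graph. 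Concretely, $Y$ would be formed by piecing together subarcs of $L_j$ joining successive return iterates, and the accumulation of $f^{n_k}(v)$ at $v$ through distinct components forces unboundedly many new components in the closure. The main obstacle, as expected, is precisely this Case B — producing the contradictory configuration cleanly requires keeping careful track of which strand of $L_j$ each $f^{n_k}(v)$ sits on, and invoking the orderly structure of $\omega(L_i) \supset L_j$ provided by clause (4) to pass from dynamical returns to a violation of the quasi-graph bound.
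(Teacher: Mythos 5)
First, note that this paper does not prove Theorem \ref{5.4} at all: it is quoted verbatim from \cite[Theorem 5.4]{ms} and used as a black box, so there is no in-paper proof to compare against; your attempt has to be measured against the argument in \cite{ms}. Your treatment of the easy implications is fine: (2) $\Rightarrow$ (3) via $W_\epsilon={\rm St}(v,\epsilon)$ with the shrinking trick to avoid the finite initial segment, and (3) $\Rightarrow$ (1) since an arcwise connectivity limit point is a limit point (you do need the ``for every $m$'' clause of Definition \ref{arc connect limit} to guarantee the hitting indices are unbounded, but that is exactly what the definition gives). Case A of (1) $\Rightarrow$ (2) is also fine, since for $v\notin\bigcup_j\omega(L_j)$ a small ball meets only a finite graph and is itself an open star, so $B(v,\epsilon)={\rm St}(v,\epsilon)$.

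The genuine gap is Case B, which is the entire content of the theorem, and the specific contradiction you propose cannot work as described. You say you would build an arcwise connected $Y$ ``by piecing together subarcs of $L_j$ joining successive return iterates'' and then violate the bound $N$ in Definition \ref{quasi-graph}. But the union of subarcs of a single quasi-arc $L_j=\varphi_j(\mathbb{R}_+)$ joining points $\varphi_j(t_1),\varphi_j(t_2),\dots$ is just $\varphi_j([\inf_k t_k,\sup_k t_k])$ or a cofinal sub-quasi-arc $\varphi_j([t_1,\infty))$; its closure minus itself is empty or essentially $\omega(L_j)$, and $L_j$ itself already satisfies the bound $N$, so no contradiction can arise. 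More fundamentally, a sequence of points sitting on pairwise distinct far strands of $L_j$ and accumulating at $v$ is a perfectly legal \emph{topological} configuration --- that is precisely what $v\in\omega(L_j)$ means --- so no argument that only records the positions of the returns $f^{n_k}(v)$ can refute it. The obstruction is dynamical: one must exploit the continuity of $f$ on whole neighborhoods, i.e.\ the results of \cite{ms} describing how a continuous map sends arcs and quasi-arcs into a quasi-graph (in the spirit of Lemma \ref{3.2} and its quasi-arc analogues), to show that a return landing deep in $L_j$ forces the image of the arc $L_j[a_j,f^{n_k}(v)]$ to sweep through $\omega(L_j)$ in a way incompatible with recurrence of $v$. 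Your sketch names continuity but never uses it, so the key step remains unproved.
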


%

\section{Existence of $f^k$-invariant large subgraphs}

Let $X$ be a compact metric space and $f\in C^0(X)$. Recall that a subset $A$ of $X$ is called $f$-invariant if $f(A)\subset A$.

\begin{lem} \label{invariant graph}
Let $X=G\cup (\cup_{i=1}^n L_i)$ be a quasi-graph, where $G$ and $L_1, \cdots, L_n$ are as in Theorem \ref{2.24}. Suppose that $f:X \rightarrow X$ is a pointwise-recurrent continuous map. Then there is a graph $G'\supset G$ in $X$ such that $G'$ is $f^k$-invariant for some $k\in \mathbb{N}$.
\end{lem}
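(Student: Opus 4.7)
The plan is to exhibit $G'$ by attaching to $G$ finite initial segments of each quasi-arc $L_i$, relying on Lemma \ref{3.2} to keep iterates graph-like and on pointwise recurrence, via Theorem \ref{5.4}, to bound how far $f^m(G)$ can extend along each $L_i$.

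\medskip

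First I would record a topological reduction. By Theorem \ref{2.24} and the fact that $\varphi_i|_{[0,t]}$ is a homeomorphism onto the arc $L_i[0,t]$, every graph $H \subset X$ with $G\subset H$ must have the form
\[
    H = G \cup \bigcup_{i=1}^{n} L_i[0, t_i], \qquad t_i \in [0,\infty),
\]
because connectedness of $H$ together with $L_i\cap G=\{a_i\}$ rules out ``floating'' sub-arcs lying deeper in some $L_i$ and disjoint from $a_i$. Combined with Lemma \ref{3.2}, each $f^m(G)$ is either a single point or a graph, hence is contained in some $G \cup \bigcup_i L_i[0, s_i^{(m)}]$ with $s_i^{(m)} \ge 0$.

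\medskip

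The heart of the argument is to show that $s_i^{(m)}$ is bounded uniformly in $m$. I would argue by contradiction: if $s_i^{(m_k)}\to\infty$ along a subsequence for some $i$, pick $x_k \in G$ with $f^{m_k}(x_k) = \varphi_i(\tau_k)$, $\tau_k\to\infty$, and extract so that $x_k\to x^* \in G$ and $\varphi_i(\tau_k)\to y^*\in \omega(L_i)\subset G\cup \bigcup_{j<i}L_j$ (by Theorem \ref{2.24}(3)). The key observation is that for any $z \in G$, a sufficiently small arcwise-connected component ${\rm St}(z,\epsilon)$ lies essentially in $G$ (or, if $z=a_j$ for some $j$, in $G \cup L_j[0,\epsilon']$ for some $\epsilon'>0$), so it cannot contain points of $L_i$ at arbitrarily large parameter. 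Applying the arcwise-connectivity characterisation of recurrence (Theorem \ref{5.4}) to the deep recurrent points $\varphi_i(\tau_k)$ and to $x^*$, one then extracts a contradiction with the hypothesis that the orbit returns arbitrarily deep along $L_i$. This is the principal obstacle of the proof: since $L_i$ is oscillatory, ``large parameter $\tau_k$'' need not mean ``large metric distance from $G$'', so the contradiction must be extracted from the arcwise-connectivity in Theorem \ref{5.4} rather than from naive metric recurrence, and it must handle carefully the possibility that $x^*$ coincides with one of the attachment points $a_j$.

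\medskip

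Once uniform bounds $s_i^{(m)}\le T_i$ are in hand, set $G' = G \cup \bigcup_i L_i[0, T_i]$: a graph containing $G$ with $\bigcup_m f^m(G)\subset G'$. Rerunning the same bounding argument with $G'$ in place of $G$ --- each $f^m(G')$ is again a graph by Lemma \ref{3.2} and is controlled by the same recurrence obstruction, and the limit point $x^*$ appearing in the contradiction now lies in the compact set $G'$ but the argument localised at $x^*$ is unchanged --- and enlarging the constants $T_i$ at most once more yields $f^k(G')\subset G'$ for some $k\in\mathbb{N}$, completing the proof of Lemma \ref{invariant graph}.
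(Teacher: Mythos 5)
Your overall strategy---bound how far the iterated images of $G$ can creep along each quasi-arc, and use recurrence to rule out unbounded creep---is the same as the paper's, but the step you yourself flag as ``the principal obstacle'' is exactly where the proof is missing, and the contradiction cannot be extracted in the way you sketch. Recurrence of the limit point $x^*$ of the points $x_k$ tells you only that the orbit of $x^*$ returns to small arcwise connected neighbourhoods of $x^*$; it says nothing about the orbits of the nearby points $x_k$. And each $x_k$ is only known to lie deep in $L_i$ at the \emph{single} time $m_k$ --- at later times its orbit may (indeed, by recurrence of $x_k$, must) come back near $x_k$, so there is nothing to contradict. What is needed is a single point whose orbit is forced outward along the quasi-arcs at \emph{every} time. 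The paper manufactures such a point: it first finds $k$ with $f^k(G)\cap G\neq\emptyset$, sets $g=f^k$ and $G_m=\cup_{i=0}^m g^i(G)$ (an increasing sequence of graphs), and, assuming $G_m\subsetneq G_{m+1}$ for all $m$, considers $A_m=\{x\in G: g^m(x)\in G_m-G_{m-1}\}$. The key observation is the nesting $A_m\subset A_{m-1}$ (since $g(G_{m-2})\subset G_{m-1}$), which makes $\{\overline{A_m}\}$ a decreasing sequence of nonempty compact sets; any $v\in\cap_m\overline{A_m}$ satisfies $g^m(v)\in\overline{G_m-G_{m-1}}$ for all $m$ and is then shown, via Theorem \ref{5.4} and the structure theorem, to be non-recurrent. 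This nested-intersection device is absent from your proposal, and without it (or some substitute) the argument does not close.

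Two secondary problems. First, your ``topological reduction'' is stated for graphs $H\supset G$, but $f^m(G)$ need not contain $G$ (it need not even meet $G$ until you have established something like Claim 1 of the paper and replaced $f$ by a power), so the form $G\cup\bigcup_i L_i[0,t_i]$ does not apply to it; only a containment in such a set holds. Second, even granting uniform bounds $s_i^{(m)}\le T_i$, the set $G'=G\cup\bigcup_i L_i[0,T_i]$ contains $\bigcup_m f^m(G)$ but there is no control on $f(L_i[0,T_i])$, so invariance of $G'$ does not follow; your proposed ``rerun and enlarge once more'' is circular, since enlarging $T_i$ changes $G'$ and hence the sets whose images must be controlled. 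The paper avoids this entirely by taking $G'$ to be the stabilized union $G_M=\cup_{i=0}^M g^i(G)$, for which $g(G_M)\subset G_{M+1}=G_M$ is automatic.
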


\begin{proof}
The lemma obviously holds if $X$ is a graph. So, we assume that $X$ contains at least one oscillatory quasi-arc; that is $X=G\cup (\cup_{i=1}^n L_i)$ with $n\neq 0$.
For each $i$, let $a_i$ be the unique endpoint of the oscillatory quasi-arc $L_i$, and let $\varphi_i: \mathbb{R}_+ \to L_i$ be the corresponding continuous bijection.

\smallskip
\noindent{\it Claim }1.\quad There is some $k\in \mathbb{N}$ such that $f^k(G)\cap G\neq\emptyset$.

\smallskip
\noindent{\it Proof of Claim }1.\quad Take a point $x\in G-\{a_1,\cdots, a_n\}$. Then there exists $\epsilon_x >0$ such that ${\rm St}(x,\epsilon_x)\subset G$. Since $x\in R(f)$, then by Theorem \ref{5.4}, there exists $k\in \mathbb{N}$ such that $f^k(x)\in {\rm St}(x,\epsilon_x)\subset G$, which implies that $f^k(G)\cap G\neq\emptyset$.

\smallskip
Let $g=f^k$. By Lemma~\ref{recurrent}, $g$ is also  pointwise-recurrent. According to Claim 1, we  see that $g^i(G)\cap g^{i-1}(G)\neq\emptyset$ for each $i\in \mathbb{N}$. In addition, $g^i(G)$ is a graph in $X$ for each $i\in \mathbb{N}$ by Lemma~\ref{3.2}. Hence $G_m:={\cup}_{i=0}^{m}g^i(G)$ is a graph in $X$ and $G_{m-1}\subset G_m$ for each $m\in \mathbb{N}$.
\smallskip

There are two cases.

\smallskip
\noindent{\bf Case 1.}\quad There exists some $M\in \mathbb{Z}_+$ such that $G_{M+1}=G_M$.
 Then $G_m=G_M$ for each positive integer $m\geq M$. So $g(G_M)={\cup}_{i=0}^{M}g^{i+1}(G)\subset G_{M+1}=G_M$. So $G':=G_M$ is a $g$-invariant graph in $X$.
Then $G'$ meets the requirement.

\smallskip
\noindent{\bf Case 2.}\quad For each $m\in \mathbb{Z}_+$, $G_m \subsetneq G_{m+1}$. Notice that if  $\varphi:\mathbb{R}_+\rightarrow L\subset X$
is an oscillatory quasi-arc and $\widetilde G$ is a graph in $X$ which contains $\varphi(0)$, then $\widetilde G\cap L=\varphi([0, s])$ for some $s\geq 0$.
Thus for each $L_i$, there exists a sequence $0=s_0^{(i)}\leq s_1^{(i)} \leq s_2^{(i)}\leq \cdots$ such that $G_m\cap L_i=\varphi_i([0, s_m^{(i)}])$, where $\varphi_i$ is the associated bijection $\varphi_i:\mathbb{R}_+\rightarrow L_i$.
It follows that $G_m=G\cup(\cup_{i=1}^n \varphi_i([0, s_m^{(i)}]))$ for each $m$. So $G_m-G_{m-1}=\cup_{i=1}^n C_{m,i}$, where
\begin{equation}
C_{m,i}=
\begin{cases} \varphi_i((s_{m-1}^{(i)},s_m^{(i)}]), & {\rm if\ \ } s_{m-1}^{(i)}<s_m^{(i)}, \\ \emptyset, & {\rm if\ \ } s_{m-1}^{(i)}=s_m^{(i)}.
\end{cases}
\end{equation}

Let $A_m=\{x\in G: g^m(x)\in G_m-G_{m-1}\}$. Then $A_m$ is nonempty  for each $m\in \mathbb{N}$ by the assumption.
\smallskip

\noindent{\it Claim }2.\quad   $A_{m}\subset A_{m-1}$.

\smallskip
\noindent{\it Proof of Claim }2.\quad For any $x\in A_m$, since $g^m(x)\notin G_{m-1}$, $g^{m-1}(x)\notin G_{m-2}$. On the other hand, $g^{m-1}(x)\in G_{m-1}$ by the definition of $G_{m-1}$. Set $g^{m-1}(x)\in G_{m-1}-G_{m-2}$ and the claim holds.

\smallskip
From Claim 2, $\{\overline{A_m}:m\in \mathbb{N}\}$ becomes a decreasing sequence of nonempty closed subsets of $G$. Then $\cap_{m=1}^{\infty}\overline{A_m}\neq\emptyset$. Take a point $v\in \cap_{m=1}^{\infty}\overline{A_m}$. Noting that $\overline{A_m}\subset \{x\in G: g^m(x)\in \overline{G_m-G_{m-1}}\}$,   we have $g^m(v)\in \overline{G_m-G_{m-1}}=\overline{\cup_{i=1}^{n}C_{m,i}}=\cup_{i=1}^{n}\overline{C_{m,i}}$, where
\begin{equation}
\overline{C_{m,i}}=
\begin{cases} \varphi_i([s_{m-1}^{(i)},s_m^{(i)}]), & {\rm if\ \ } s_{m-1}^{(i)}<s_m^{(i)}, \\ \emptyset, & {\rm if\ \ } s_{m-1}^{(i)}=s_m^{(i)}.
\end{cases}
\end{equation}

\noindent{\it Claim }3.\quad  $v\notin R(g)$.

\smallskip
\noindent{\it Proof of Claim }3.\quad We discuss into  two cases:
\smallskip

\noindent{\bf Subcase 2.1.}\quad $v\in G-\{a_1,\cdots, a_n\}$. Then by Theorem~\ref{2.24}, there exists $\epsilon>0$ such that ${\rm St}(v,\epsilon)\subset G-\{a_1,\cdots, a_n\}$. Thus we have ${\rm St}(v,\epsilon)\cap \overline{G_m-G_{m-1}}=\emptyset$ for all $m\in \mathbb{N}$, which implies that ${\rm St}(v,\epsilon)\cap O(g(v),g)=\emptyset$. So $v\notin R(g)$ by Theorem~\ref{5.4}. This is a contradiction.

\smallskip
\noindent{\bf Subcase 2.2.}\quad $v=a_i$ for some $i\in \{1,\cdots,n\}$; say $i=1$.
If $\overline{C_{m,1}}=\emptyset$ for all $m\in \mathbb{N}$, then we can take a sufficient small $\epsilon>0$ such that ${\rm St}(v,\epsilon)\subset (G-\{a_1,\cdots,a_n\})\cup L_1$. Thus ${\rm St}(v,\epsilon)\cap \overline{G_m-G_{m-1}}=\emptyset$ for all $m\in \mathbb{N}$. So, ${\rm St}(v,\epsilon)\cap O(g(v),g)=\emptyset$, which implies that $v\notin R(g)$. This is a contradiction. Thus there is some $m_1\in \mathbb{N}$ such that $\overline{C_{m_1,1}}=\varphi_1([0,s_{m_{1}}^{(1)}])$ for some $s_{m_{1}}^{(1)}>0$. Take an $\epsilon>0$ with ${\rm St}(v,\epsilon)\subset (G-\{a_1,\cdots,a_n\})\cup \varphi_1([0,s_{m_1}^{(1)}))$. Then for  $m>m_1$, ${\rm St}(v,\epsilon)\cap \overline{G_m-G_{m-1}}=\emptyset$, which means ${\rm St}(v,\epsilon)\cap \{g^m(v):m>m_1\}=\emptyset$. This implies that $v\notin R(g)$ and leads to a contradiction.
\smallskip

From Claim 3,  Case 2 cannot occur. Thus we complete the proof.
\end{proof}

\section{Proof of the main theorem}
First we recall some notions and notations. Let $X$ be a quasi-graph and $\varphi:\mathbb{R}_+\rightarrow L\subset X$ be
a quasi-arc. If there are $s, t\in \mathbb{R}_+$ such that $x=\varphi(s)$ and $y=\varphi(t)$, then we use $L[x, y]$ to denote $\varphi([s, t])$.
A point sequence $x_1, x_2, x_3, \cdots$ in $X$ is said to be {\it cofinal with} $L$ if there exist $m\in \mathbb{N}$ and positive numbers $t_0<t_1<t_2<\cdots$ such that $x_{m+i}=\varphi(t_i)$ for all $i\in \mathbb{R}_+$ and ${\rm lim}_{i\to \infty}t_i=\infty$.
\smallskip

To prove the main theorem, we need the following lemma.

\begin{lem} \label{fix quasi-arc}
	Let $X$ be a quasi-graph, and $f:X \rightarrow X$ be a pointwise recurrent continuous map. Suppose that $L$ is an oscillatory quasi-arc in $X$ with a (unique) endpoint $a$. If $L \cap {\rm Br}(X)=\emptyset$ and $a\in {\rm Fix}(f)$, then there exists some $k\in \mathbb{N}$ such that $L \subset {\rm Fix}(f^{k})$.	
\end{lem}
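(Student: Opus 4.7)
The plan is to combine Lemma~\ref{invariant graph} with Mai's Theorem~\ref{1.1} to fix a large subgraph of $X$ under a suitable power of $f$, and then to reduce the remaining dynamics on $L$ to a one-dimensional argument on $\mathbb{R}_{+}$.

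First I apply Lemma~\ref{invariant graph} to $f$, obtaining $k \in \mathbb{N}$ and a graph $G' \supset G$ in $X$ with $f^{k}(G') \subset G'$ and, from the construction in that proof, $G' \cap L = \varphi([0,s])$ for some $s \geq 0$ (viewing $L$ as one of the oscillatory quasi-arcs in a decomposition as in Theorem~\ref{2.24}). Since $f^{k}|_{G'}$ is a pointwise recurrent graph map (Lemma~\ref{recurrent}) with fixed point $a \in G'$, Theorem~\ref{1.1} excludes the irrational rotation alternative and produces $p \in \mathbb{N}$ such that $h := f^{kp}$ satisfies $h|_{G'} = id_{G'}$.

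Next I show that $h$ preserves the tail $T := \varphi((s,\infty)) = L \setminus G'$. The bulk invariance $h(X \setminus G') \subset X \setminus G'$ is immediate: if $h(x) \in G'$ for $x \notin G'$, then $h^{n}(x) = h(x) \neq x$ for all $n \geq 1$, contradicting recurrence of $x$. Upgrading this to $h(T) \subset T$ amounts to showing that $T$ is its own arcwise-connected component in $X \setminus G'$: an arc in $X \setminus G'$ starting in $T$ and leaving $T$ would have to meet $\overline{T} \setminus T \subset \{\varphi(s)\} \cup \omega(L)$, and since $\varphi(s) \in G'$, only a target $y \in \omega(L)$ remains, reached via a strictly monotone parametrization $u:[0,1) \to \mathbb{R}_{+}$ with $u(t) \to \infty$ and $\varphi(u(t)) \to y$; but then, using a second point $z \in \omega(L) \setminus \{y\}$ (which exists because $L$ is oscillatory) and the homeomorphism property of $u:[0,1) \to [u(0),\infty)$, I would get another sequence $t_{n} \to 1$ with $\varphi(u(t_{n})) \to z$, forcing $y = z$, a contradiction. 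Combined with continuity of $h$ at the fixed point $\varphi(s)$ and the valence-$2$ local structure at $\varphi(s)$ (granted by $L \cap {\rm Br}(X) = \emptyset$), this yields $h(T) \subset T$.

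With $h(L) \subset L$ in hand, I set $\bar{\tau}:[s,\infty) \to [s,\infty)$, $\bar{\tau}(t) := \varphi^{-1}(h(\varphi(t)))$, which is continuous (because $\varphi$ restricts to a homeomorphism on each compact sub-arc of $L$) and satisfies $\bar{\tau}(s) = s$. The relation $h^{n}(\varphi(t)) = \varphi(\bar{\tau}^{n}(t))$ together with Theorem~\ref{5.4} shows that $\bar{\tau}^{j_{k}}(t) \to t$ along a subsequence for each $t$, so $\bar{\tau}$ is pointwise recurrent on $[s,\infty)$. A standard one-dimensional argument then forces $\bar{\tau} = id$: pointwise recurrence makes $\bar{\tau}$ injective (otherwise two distinct points would share an orbit tail, incompatible with simultaneous recurrence to distinct values), and continuous injectivity on an interval with boundary fixed point $s$ forces $\bar{\tau}$ to be strictly increasing; then any $t_{0}$ with $\bar{\tau}(t_{0}) \neq t_{0}$ would generate a strictly monotone orbit converging to a fixed point or to $\infty$, contradicting recurrence at $t_{0}$. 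Hence $h|_{L} = id$, i.e., $L \subset {\rm Fix}(f^{kp})$.

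The principal obstacle I expect is the tail-invariance claim in the second paragraph: showing that $T$ is arcwise-isolated in $X \setminus G'$, even though other oscillatory quasi-arcs of $X$ may accumulate on $L$. This is precisely the place where the oscillatory hypothesis $|\omega(L)| \geq 2$ is used in an essential way.
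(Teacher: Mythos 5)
Your first two paragraphs are essentially sound but take a much longer route than necessary: the invariant graph $G'$ from Lemma~\ref{invariant graph}, Mai's Theorem~\ref{1.1}, and the arcwise-component analysis of the tail $T$ are all used only to conclude that some power of $f$ maps $L$ into $L$. The paper gets this in a few lines: pick one point $x\in L\setminus\{a\}$, use Theorem~\ref{5.4} to find $k$ with $f^k(x)\in L$, and observe that if some $y\in L$ had $f^k(y)\notin L$, then the arcwise connected set $f^k(L[x,y])$ would have to pass through the fixed point $a$, producing a point $z\in L$ with $f^k(z)=a$ and hence $z\notin R(f)$. Your detour is not wrong (and your use of the oscillatory hypothesis to show $T$ is arcwise-isolated is a correct observation), but it also quietly assumes $L$ is one of the $L_i$ of a Theorem~\ref{2.24} decomposition with $a\in G$, which the lemma's hypotheses do not literally give you.

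The genuine gap is in the final paragraph. The parenthetical justification of injectivity of $\bar\tau$ --- that two distinct points sharing an orbit tail is ``incompatible with simultaneous recurrence to distinct values'' --- is not a valid argument: a single orbit can accumulate on two distinct points each of which is recurrent (this is exactly the situation in a non-invertible minimal system such as the one-sided shift on a Sturmian subshift, which is pointwise recurrent but not injective). So pointwise recurrence alone does not yield injectivity, and injectivity is precisely the hard part of the ``standard one-dimensional argument'' you invoke; for a compact interval it is essentially Mai's theorem itself, and your domain $[s,\infty)$ is not even compact. The paper avoids injectivity altogether: it considers ${\rm Fix}(f^k)\cap L$, takes a complementary interval $A$ (either all of $L\setminus\{a\}$, or a subarc $L[b,c]$ with fixed endpoints), first proves that $f^k$ maps the interior of $A$ into itself (a point of the interior mapping onto a fixed endpoint would fail to be recurrent, and the intermediate value theorem rules out the image leaving $A$), and then uses only that the displacement $t\mapsto\bar\tau(t)-t$ has constant sign on a fixed-point-free interval. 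Constant sign of the displacement already forces every orbit remaining in that interval to be monotone, hence non-recurrent, with no injectivity or monotonicity of $\bar\tau$ required. Your proof of $\bar\tau={\rm id}$ can be repaired by substituting this constant-sign argument for the injectivity step, but as written it does not go through.
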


\begin{proof}
Take a point $x\in L-\{a\}$. Since $x\in R(f)$,  it is an arcwise connected limit point of the point sequence $\{f^n(x):n\in \mathbb{Z}_+\}$  by Theorem \ref{5.4}. Thus there exists some $k\in \mathbb{N}$ such that $f^k(x)\in L$.
\smallskip

We claim that $f^k(L)\subset L$. Otherwise,  there is some $y\in L$ with $f^k(y)\notin L$. Then by the connectivity of $f^k(L[x,y])$, we have that $a\in f^k(L[x,y])$. Then there is  $z\in L[x,y]$ with $f^k(z)=a$. Since $a\in {\rm Fix}(f)$, it follows that $z\notin R(f)$, which is a contradiction.
\smallskip

Now we prove that $L\subset {\rm Fix}(f^k)$. Assume to the contrary that there exists some point $x\in L$ such that $x\notin {\rm Fix}(f^k)$. Then we discuss into two cases.

\smallskip
\noindent {\bf Case 1.}\quad $L\cap {\rm Fix}(f^k)=\{a\}$. For any $x\in L-\{a\}$, since $f^k(L)\subset L$ by the above claim, we see that $O(x,f^k)\subset L$. Moreover, we have $O(x,f^k)\subset L-\{a\}$. (Otherwise, if  $f^{nk}(x)=a$ for some $n\in \mathbb{N}$, then $x\notin R(f)$, which is a contradiction.) This implies that either ${\rm lim}_{n\to \infty}f^{nk}(x)=a$, or the point sequence $\{f^{nk}(x):n\in \mathbb{N}\}$ is confinal with $L$, both of which imply that $x\notin R(f^k)$. This contradicts with the fact that $x\in R(f)$ by Lemma~\ref{recurrent}. So this case does not occur.

\smallskip
\noindent{\bf Case 2.}\quad There is a subarc $A=L[b,c]$ in $L$ such that $A\cap {\rm Fix}(f^k)=\{b,c\}$. (The point $b$ may be $a$.)
In this case, since $f^k(A)$ is connected and $L\cap {\rm Br}(X)=\emptyset$,  we have that $f^k({\AA})\subset {\AA}$. (Otherwise, there exists a point $x\in {\AA}$ with $f^k(x)\in \{b,c\}$. Then $x\notin R(f^k)=R(f)$, which is a contradiction.) It follows that ${\rm lim}_{n\to \infty}f^{nk}(x)=b$ or $c$ for all $x\in {\AA}$, which contradicts
the recurrence of $x$. So, Case 2 cannot occur.
\smallskip

All together, we complete the proof.
\end{proof}

	

\begin{proof}[Proof of Theorem \ref{main thm}]
	If $X$ is a graph, then the theorem holds by Theorem~\ref{1.1}. So we suppose that $X$ contains at least one oscillatory quasi-arc. Let $X=G \cup (\bigcup_{i=1}^{n}L_i)$, where $G$ is a graph and $L_1\cdots,L_n$ are pairwise disjoint oscillatory quasi-arcs satisfying the conditions (1)-(4) of Theorem~\ref{2.24}. Furthermore, by Lemma~\ref{invariant graph}, we can assume that $G$ is  $f^k$-invariant for some $k\in \mathbb{N}$. Denote $g=f^k$. Then $g$ is  pointwise recurrent by Lemma~\ref{recurrent}. Specially, $g|_G$ is a pointwise recurrent on $G$. According to Theorem \ref{1.1}, there are two cases.

\smallskip
\noindent{\bf Case 1.}\quad $G$ is a circle and $g|_G$ is a homeomorphism topologically conjugate to an irrational rotation of the unit circle $\mathbb{S}^1$.	
However, this contradicts the assumption that $X$ contains at least one oscillatory quasi-arc. So, this case does not occur.

\smallskip
\noindent{\bf Case 2.}\quad $g|_G$ is a perodic homeomorphism of $G$.
 Then there exists some $p \in \mathbb{N}$ such $g^p|_G={\rm id}_G$. Denote $h=g^p$.  Since $a_i\in G\subset {\rm Fix}(h)$ and $a_i$ is the endpoint of $L_i$ for each $i\in \{1,\cdots,n\}$,  by Lemma~\ref{fix quasi-arc},
there exists $k_i\in \mathbb{N}$ such that $L_i \subset {\rm Fix}(h^{k_i})$. Take $m=kpk_1k_2\cdots k_n$, then $X=G \cup (\bigcup_{i=1}^{n}L_i)\subset {\rm Fix}(f^m)$. Hence $f$ is a periodic homeomorphism.
\end{proof}

\end{document}